\newtheoremstyle{mystyle}% name of the style to be used
  {3pt}% measure of space to leave above the theorem. E.g.: 3pt
  {3pt}% measure of space to leave below the theorem. E.g.: 3pt 
  {}% name of font to use in the body of the theorem \bfseries \itshape \scshape \sfserief etc...\itfam
  {}% measure of space to indent
  {\bfseries}% name of head font
  {:}% punctuation between head and body
  {0.5 em}% space after theorem head; " " = normal interword space
  {}% Manually specify head
\theoremstyle{mystyle}
\newtheorem{definition}{Definition}
\newtheorem{remark}{Remark}
\newtheorem{lemma}{Lemma}
\newtheorem{example}{Example}
\newcommand{\clifford}{\mathcal{C}\ell}
\newcommand{\pgl}{\mathrm{PGL}}
\newcommand{\se}{\mathrm{SE}}
\newcommand{\so}{\mathrm{SO}}
\begin{document}
\begin{JGGarticle}
{Kinematic Mappings for Cayley-Klein Geometries via Clifford Algebras}
{}
{Daniel Klawitter, Markus Hagemann}
{\JGGaddress{
%... first line of address ...\\
Dresden University of Technology, Germany}
%... if wanted, more lines of address ...\\
%... if wanted, email address}
%if other addresses needed then repeat
%\JGGaddress{ ... } as often as necessary
}
\begin{JGGabstract}\\
This paper unifies the concept of kinematic mappings by using geometric algebras. We present a method for constructing kinematic mappings for certain Cayley-Klein geometries. These geometries are described in an algebraic setting by the homogeneous Clifford algebra model. Displacements correspond to Spin group elements. After that Spin group elements are mapped to a kinematic image space. Especially for the group of planar Euclidean displacements $\se(2)$ the result is the kinematic mapping of Blaschke and Gr\"unwald. For the group of spatial Euclidean displacements  $\se(3)$ the result is Study's mapping. Furthermore, we classify kinematic mappings for Cayley-Klein spaces of dimension $2$ and $3$.\\
[1mm]{\em Key Words:}
Kinematic mapping, Cayley-Klein geometry, Study's quadric, Clifford algebra.\\
{\em MSC2010:} 15A66% (Clifford algebras)
, 51J15% (kinematic spaces)
, 51M15% (geometric constructions)
, 51N25% (analytic geometry with other transformation groups)
\end{JGGabstract}

\section*{Introduction}
To make this paper self-contained we shall give a brief introduction to Clifford respectively geometric algebras. This paper is organized as follows: In Section 1 we introduce the concept of geometric algebras. Therefore, Clifford algebras and some of their properties are introduced. Furthermore, Section 1 deals with the Pin and Spin group of a Clifford algebra.
After that in Section 2, we recall Cayley-Klein spaces and show how to describe certain Cayley-Klein spaces within the homogeneous Clifford algebra model. This construction is based on the work of Charles Gunn, cf. \cite{gunn:onthehomogeneousmodel,gunn:kinematics}, and is accomplished in detail for the $3$-dimensional Euclidean space. Section 3 deals with kinematic mappings. The kinematic mapping of Study and the mapping of Blaschke and Gr\"unwald are presented. Thereafter, we describe how to construct these mappings and other in an unified method in Section 4. Furthermore, the matrices of the collineations in the image and pre-image space are derived. Again we do this in detail for the Euclidean spaces of dimension $2$ and $3$. Section 5 gives an overview of possible kinematic mappings for Cayley-Klein spaces of dimension $2$ and $3$. Moreover, the mapping for the $4$-dimensional Euclidean space is presented.

\section{Geometric Algebras}
Geometric algebras are special Clifford algebras over the field of real numbers. They are associative algebras that generalize the complex numbers, Hamilton's quaternions and biquaternions. Geometric algebras, abbreviated by GA, find application in computer graphics \cite{dorst:geometricalgebra}, robotics \cite{gunn:kinematics,selig:geometricfundamentalsofrobotics,mccarty:theoreticalkinematics}, physics \cite{hestenes:cliffordalgebra} and a lot of other disciplines. Here we focus on their application in kinematics of some Cayley-Klein geometries. The great advantage of GA is that geometric entities such as points, lines and planes can be described as algebra elements. Furthermore, transformations can also be described as special algebra elements and the effect of the transformations applied to a geometric entity is realized by an algebra operation. A complete treatise of this topic is outside the scope of this paper, but we try to give enough references for the interested reader.

\subsection{Definition of a geometric algebra}\label{section1.1}
We start with a finite dimensional real vector space $V=\mathds{R}^n$ equipped with a quadratic form $b:V\rightarrow\mathds{R}$. The pair $(V,b)$ is called a \emph{quadratic space}. The symmetric matrix corresponding to the quadratic form $b$ is denoted by $B_{ij}$ with $1\leq i,j\leq n$. Therefore, $b(x_i,x_j)=B_{ij}$ for some basis vectors $x_i$ and $x_j$. The algebra is defined by the relations
\begin{equation}\label{label.2.1}
x_ix_j + x_jx_i=2B_{ij},\quad 1\leq i,j\leq n.
\end{equation}
Usually the corresponding Clifford algebra is denoted by $\clifford(V,b)$. We shall use another notation according to this algebra. With Sylvester's law of inertia we can always find a basis $\left\lbrace e_1,\dots,e_n \right\rbrace $ of $V$ such that $e_i^2$ is either $1,-1$ or $0$. The number of basis vectors that square to $(1,-1,0)$ is denoted by the \emph{signature} $(p,q,r)$. Therefore, we will use the notation $\clifford_{(p,q,r)}$ instead of $\clifford(V,b)$. If $r\neq 0$ we call the geometric algebra \emph{degenerated}. Furthermore, the relations \eqref{label.2.1} become
 \begin{equation}\label{label.2.2}
 e_ie_j+e_je_i=0,\quad i\neq j.
 \end{equation}
For details the interested reader is referred to \cite{selig:geometricfundamentalsofrobotics}. Note that Eq. \eqref{label.2.2} also shows that two basis elements anti-commute, {\it i.e.},
\begin{equation*}%\label{label.2.3}
e_ie_j=-e_je_i,\quad i\neq j.
\end{equation*}
In the remainder of this paper we shall abbreviate the geometric product of basis elements with lists
\[e_{12\dots k}:=e_1e_2\dots e_k, \mbox{ with $0\leq k\leq n$}.\]
With this notation scalars, bivectors, trivectors and pseudo scalars are represented by $ae_0$,
,$\sum\limits_{\substack{i,j=1\\i<j}}^n e_{ij}a_{ij}$, $\sum\limits_{\substack{i,j,k=1\\i<j<k}}^n e_{ijk}a_{ijk}$ and $ae_{12\dots n}$, where all occurring coefficients are real numbers.
\begin{example}\label{example1}
In order to bring light into this definition, we present the quaternions as elements of a Clifford algebra. Therefore, we construct the Clifford algebra $\clifford_{(0,2,0)}$. This means we have two basis vectors, that square to $-1$. The field $\mathds{R}$ is embedded with the basis element $e_0$ and an element of $\clifford_{(0,2,0)}$ can be written as:
\[\mathfrak{a}=a_0e_0+a_1e_1+a_2e_2+a_{12}e_{12}.\]
We write Clifford algebra elements as small fractal letters. The basis\linebreak
$\left\lbrace e_0,e_1,e_2,e_{12}\right\rbrace $ of $\clifford_{(0,2,0)}$ is called a \emph{standard basis} of this geometric algebra.
As we already know from the definition $e_1^2=e_2^2=-1$. The square of $e_{12}$ evaluates to
\[e_{12}^2=e_{12}e_{12}=-e_{12}e_{21}=-e_{1221}=e_{11}=-1.\]
If we now identify $e_1$ with $\mathbf{i}$, $e_2$ with $\mathbf{j}$, and $e_{12}$ with $\mathbf{k}$, we get an isomorphism between $\clifford_{(0,2,0)}$ and $\mathds{H}$. Of course the multiplication rules for the quaternion units $\mathbf{i},\mathbf{j}$ and $\mathbf{k}$ have to be  verified. This is left to the reader as an exercise.
\end{example}
\noindent
For every Clifford algebra over an $n$-dimensional vector space a general element is the sum of scalars, vectors, bivectors, trivectors up to pseudo scalars, it is called a \emph{multivector} and reads
\[\mathfrak{m}=a_0e_0+a_1e_1+a_ne_n+a_{12}e_{12}+\dots +a_{n(n-1)}e_{n(n-1)}+\dots +a_{1\dots n}e_{1\dots n}.\]

\subsection{Properties of Clifford Algebras}
\paragraph*{A Clifford algebra is a graded algebra:$\,$}
Every basis element $e_{\alpha\beta\dots\gamma}$ can be transformed to a basis element of the form $e_{ij\dots k}$, where $i<j<k$. Each swap of two elements causes a multiplication by $-1$. If we arrive at $e_{ii}$ we can insert $1,-1$ or $0$ as prescribed by the signature $(p,q,r)$. The $2^n$ monomials
\[e_{i_1 i_2 \dots i_k},\quad 0\leq k\leq n\]
form the standard basis of the Clifford algebra. An algebra element that is the product of invertible grade-1 elements is called a \emph{versor}. The Clifford algebra $\clifford_{(p,q,r)}$ is the direct sum $\bigoplus\limits_{i=0}^n{\bigwedge}^i V$ of all exterior products ${\bigwedge}^i V$ of $V$ of any grade $0\leq i\leq n$ where $e_{k_1\dots k_i},k_1<\dots<k_i$ form a basis of ${\bigwedge}^i V$.
Especially ${\bigwedge}^0 V$ is the scalar part $\mathds{R}$ and ${\bigwedge}^1 V$ represents the vector space $V$. Bivectors, trivectors and \emph{pseudo scalars} are elements from ${\bigwedge}^2 V $, ${\bigwedge}^3 V $ and ${\bigwedge}^n V $, respectively. The dimension of each subspace ${\bigwedge}^i V$ is $\binom{n}{i}$. Thus, the dimension of the Clifford algebra is $\sum_{i=0}^n(\dim {\bigwedge}^i V)=\sum_{i=0}^n\binom{n}{i}=2^n$. A Clifford algebra is called \emph{universal} if $\dim \clifford_{(p,q,r)}=2^n,\, n=p+q+r$. For every quadratic space $(V,b)$ there exists an universal Clifford algebra, see \cite{garling:cliffordalgebras}. Therefore, we will restrict ourself to the standard basis of the corresponding universal algebra. Furthermore, the Clifford algebra $\clifford_{(p,q,r)}$ can be decomposed in an even and an odd part
\begin{equation*}%\label{label.2.5}
\clifford_{(p,q,r)}=\clifford^+_{(p,q,r)}\oplus \clifford^-_{(p,q,r)}=\bigoplus_{\substack{i=0\\i\text{ mod $2=0$} }}^n{\bigwedge}^iV\oplus \bigoplus_{\substack{i=0\\i \text{ mod $2=1$}}}^n{\bigwedge}^iV.
\end{equation*}
Note that the even part $\clifford^+_{(p,q,r)}$ is a subalgebra, because the product of two even graded monomials must be even graded and the generators cancel only in pairs. The dimension of the even subalgebra is $2^{n-1}$. Furthermore we have the isomorphism
\begin{equation*}%\label{label.2.6}
\clifford_{(p,q,r)}\cong\clifford^+_{(p,q+1,r)}.
\end{equation*}
For details see \cite{porteous:cliffordalgebrasandtheclassicalgroups}. To make this isomorphism more clear we give another example.
\begin{example}\label{example2}
As we already know from example \ref{example1} the Clifford algebra $\clifford_{(0,2,0)}$ is isomorphic to $\mathds{H}$. The even part of this algebra is generated by $\left\lbrace e_0,e_{12}\right\rbrace $ and this is isomorphic to $\mathds{C}$. Furthermore, the Clifford algebra $\clifford_{0,1,0}$ is the algebra with one generator squaring to $-1$. Therefore, $\clifford_{(0,1,0)}\cong \mathds{C}$. All in all we have $\clifford^+_{0,2,0}\cong\clifford_{0,1,0}$.
\end{example}

\subsection{Clifford algebra automorphisms}
\paragraph{The conjugation:}
Every Clifford algebra possesses an \emph{anti-involution}, {\em i.e.}, an automorphism of the algebra that reverses the order of products. We will follow \cite{selig:geometricfundamentalsofrobotics} and denote the conjugation by an asterisk. The effect on any generator is given by $e_i^\ast=-e_i$. On scalars it has no effect. If we extend the conjugation by using linearity and the anti-involution property to arbitrary algebra elements, we get the formula
\begin{equation*}%\label{label.2.7}
(e_{i_1}e_{i_2}\dots e_{i_k})^\ast=(-1)^ke_{i_k}\dots e_{i_2}e_{i_1},\quad 0\leq i_1<i_2<\dots <i_k\leq n.
\end{equation*}
\begin{example}\label{example3}
We apply the conjugation to an algebra element of the Clifford algebra $\clifford_{(0,2,0)}\cong\mathds{H}$. The standard basis is given by $\left\lbrace e_0,e_1,e_2,e_{12}\right\rbrace $. The conjugation for the grade-1 basis elements $e_1$ and $e_2$ is given via definition by $e_1^\ast=-e_1$ and $e_2^\ast=-e_2$. The conjugation has no effect on scalars, so $e_0^\ast=e_0$. It remains to extend these observations to the grade-2 element $e_{12}$. We use the anti-involution property and the anti commutativity
\[e_{12}^\ast=(e_1e_2)^\ast=e_2^\ast e_1^\ast=(-1)^2e_2e_1=e_{21}=-e_{12}.\]
Hence, the conjugation of quaternions fits into the concept.
\end{example}
\noindent 
A general grade-1 element in $\clifford_{(p,q,r)}$ has the form
\[\mathfrak{v}=x_1e_2+x_2e_2+\dots+x_ne_n,\quad n=p+q+r.\]
The product with its conjugate element is given by:
\begin{equation}\label{label.2.8}
\mathfrak{vv}^\ast=-x_1^2-x_2^2-\dots-x_p^2+x_{p+1}^2+\dots+x_{p+q}^2=-b(x,x).
\end{equation}
If we identify $V$ with ${\bigwedge}^1 V$ Eq. \eqref{label.2.8} describes the negative of the quadratic form $b(x,x)$ with $x=(x_1,\dots,x_n)^T\in V$.
\paragraph{The main involution:} Another automorphism of a Clifford algebra is the \emph{main involution}. It is denoted by $\alpha$ and defined by
\begin{equation*}%\label{label.2.9}
\alpha(e_{i_1}e_{i_2}\dots e_{i_k})=(-1)^k e_{i_1}e_{i_2}\dots e_{i_k},\quad 0\leq i_1<i_2<\dots <i_k\leq n.
\end{equation*}
The main involution has no effect on the even subalgebra and it commutes with the conjugation. This means for an arbitrary algebra element $\mathfrak{a}\in\clifford_{(p,q,r)}$, the equation $\alpha(\mathfrak{a}^\ast)=\alpha(\mathfrak{a})^\ast$ holds.
\subsection{Clifford Algebra Products}
\paragraph{The inner product:}
The scalar product of two vectors, {\em i.e.}, grade-1 elements can be written in terms of the geometric product
\begin{equation}\label{label.2.10}
\mathfrak{a}\cdot \mathfrak{b}:=\frac{1}{2}(\mathfrak{ab}+\mathfrak{ba}).
\end{equation}
This can also be generalized to multivectors, see for example \cite{hestenes:cliffordalgebra}. For our purposes it is sufficient to define this product for vectors.
\paragraph{The outer product:}
The outer respectively exterior product for two vectors is given by
\begin{equation}\label{label.2.11}
\mathfrak{a}\wedge \mathfrak{b}:=\frac{1}{2}(\mathfrak{ab}-\mathfrak{ba}).
\end{equation}
\paragraph{The geometric product:}
With the inner product \eqref{label.2.10} and the outer product \eqref{label.2.11} the geometric product of vectors can be written in the following form
\begin{equation*}%\label{label.2.12}
\mathfrak{ab}=\mathfrak{a}\cdot \mathfrak{b}+\mathfrak{a}\wedge \mathfrak{b}.
\end{equation*}
\subsection{Pin and Spin-groups}
Not every element of a Clifford algebra needs to have an inverse element. In general there are zero divisors, this means for two non zero elements $\mathfrak{a},\mathfrak{b}\in\clifford_{(p,q,r)}$ the product $\mathfrak{ab}$ is zero.
\begin{example}
A simple example for a zero divisor is given by an element $\mathfrak{a}=ae_1+ae_2\in\clifford_{(1,1,0)}$. This Clifford algebra has the standard basis $\left\lbrace e_0,e_1,e_1,e_{12}\right\rbrace $ with $e_1^2=1,\,e_2^2=-1$ and $e_{12}^2=1$. We calculate:
\[\mathfrak{a}^2=(ae_1+ae_2)^2=a^2e_1^2+a^2e_{12}+a^2e_{21}+a^2e_2^2=0.\]
Note that $\clifford_{(1,1,0)}\cong \mathds{R}^{2\times 2}$.
\end{example}
\noindent
The inverse element for a versor is defined by $\mathfrak{a}^{-1}:=\frac{\mathfrak{a}^\ast}{N(\mathfrak{a})}$ with $N(\mathfrak{a}):=\mathfrak{aa}^\ast$. The map $N:\clifford_{(p,q,r)}\rightarrow\clifford_{(p,q,r)}$ is called the \emph{norm} of the Clifford algebra. For general multivectors $\mathfrak{m}\in\clifford_{(p,q,r)}$, that are no zero divisors, inverse elements exist and are defined through the relation $\mathfrak{mm}^{-1}=\mathfrak{m}^{-1}\mathfrak{m}=1$, but the determination is more difficult. This will be discussed in Section \ref{matrixsection}.
\subsection{Clifford Group}
Invertible elements are called \emph{units}. We shall denote the set of units of a Clifford algebra by $\clifford_{(p,q,r)}^\times$. With respect to the geometric product the units form a group. 
For a given Clifford algebra the \emph{Clifford group} is defined by
\begin{equation*}%\label{label.2.13}
\Gamma(\clifford_{(p,q,r)}):=\left\{\mathfrak{g}\in\clifford_{(p,q,r)}^\times\left|\alpha(\mathfrak{g})\mathfrak{vg}^{-1}\in {\bigwedge}^1 V \,\mbox{ for all } \mathfrak{v}\in {\bigwedge}^1 V\right.\right\}.
\end{equation*}
It is not obvious that $\Gamma(\clifford_{(p,q,r)})$ is a group. For a proof we refer to \cite{gallier:cliffordalgebrascliffordgroups}. Note that the condition $N(\mathfrak{g})=1$ guarantees that $\mathfrak{g}$ is an unit.
\paragraph{The Pin group:} The Pin group is defined by
\begin{equation*}%\label{label.2.14}
\mbox{Pin}_{(p,q,r)}:=\left\{\mathfrak{g}\in\clifford_{(p,q,r)}\left|N(\mathfrak{g}) = 1\mbox{ and }\alpha(\mathfrak{g})\mathfrak{vg}^\ast\in {\bigwedge}^1 V\, \mbox{ for all } \mathfrak{v}\in {\bigwedge}^1 V\right.\right\}.
\end{equation*}
The action of the so called \emph{sandwich operator} $\alpha(\mathfrak{g})\mathfrak{vg}^\ast$ with $\mathfrak{g}\in\mbox{Pin}_{(p,q,r)}$ applied to vectors does not change the scalar product of two vectors. This can be verified easily by direct calculation. Let $\mathfrak{a,b}\in{\bigwedge}^1 V$ and $\mathfrak{g}\in\mbox{Pin}(p,q,r)$. 
Furthermore, let $\mathfrak{a}'=\alpha(\mathfrak{g})\mathfrak{ag}^\ast$ and $\mathfrak{b}'=\alpha(\mathfrak{g})\mathfrak{bg}^\ast$. The scalar product can be expressed in terms of the geometric product, see Eq. \eqref{label.2.10}. We use this equation for the transformed vectors $\mathfrak{a}'$ and $\mathfrak{b}'$ and find
\begin{align*}
\mathfrak{a}'\cdot \mathfrak{b}'&=\frac{1}{2}\left( \alpha(\mathfrak{g})\mathfrak{ag}^\ast \alpha(\mathfrak{g})\mathfrak{bg}^\ast + \alpha(\mathfrak{g})\mathfrak{bg}^\ast \alpha(\mathfrak{g})\mathfrak{ag}^\ast\right)\\
		 &=\frac{1}{2}\alpha(\mathfrak{g})\left[ \mathfrak{ag}^\ast \alpha(\mathfrak{g})\mathfrak{b} + \alpha(\mathfrak{g})\mathfrak{bg}^\ast \alpha(\mathfrak{g})\mathfrak{a}\right]\mathfrak{g}^\ast.
\intertext{Since $\alpha(\mathfrak{g})$ is either $\mathfrak{g}$ or $-\mathfrak{g}$, we can conclude that $\mathfrak{g}^\ast \alpha(\mathfrak{g})=\alpha(\mathfrak{g})\mathfrak{g}^\ast$ is either $1$ or $-1$. Therefore, the term in square brackets is a scalar and consequently}
\mathfrak{a}'\cdot \mathfrak{b}'&=\frac{1}{2}\alpha(\mathfrak{g})\mathfrak{g}^\ast\left[ \mathfrak{g}^\ast \alpha(\mathfrak{g})(\mathfrak{ab} + \mathfrak{ab})\right].
\intertext{The product $\mathfrak{g}^\ast \alpha(\mathfrak{g})=\alpha(\mathfrak{g})\mathfrak{g}^\ast=\pm 1$. Thus, we get}
\mathfrak{a}'\cdot \mathfrak{b}' &=\frac{1}{2}(\alpha(\mathfrak{g})\mathfrak{g}^\ast)(\mathfrak{g}^\ast\alpha(\mathfrak{g}))\left(\mathfrak{ab} + \mathfrak{ab}\right)
		  =\frac{1}{2}\left(\mathfrak{ab} + \mathfrak{ab}\right)
		  =\mathfrak{a}\cdot \mathfrak{b}.
\end{align*}
Linear transformations of ${\bigwedge}^1 V$ that preserve distances and angles, {\em i.e.}, the scalar product, are elements of the orthogonal group $O(p,q,r)$. In fact $\mbox{Pin}(p,q,r)$ is a double cover of the orthogonal group $O(p,q,r)$.
\begin{remark}
The action $\mathfrak{a}\mapsto \alpha(\mathfrak{v})\mathfrak{av}^\ast$ for $\mathfrak{v}\in\mbox{Pin}(p,q,r)\cap {\bigwedge}^1 V$ is a reflection in the hyperplane perpendicular to $\mathfrak{v}$ with respect to $b$. The composition of reflections is again an element of the Pin group.
\end{remark}
\paragraph{The Spin group:} The second important subgroup of the group of units is the \emph{Spin group} defined via
\begin{align*}%\label{label.2.15}
\mbox{Spin}_{(p,q,r)}:&=\left\{\mathfrak{g}\in\clifford^+_{(p,q,r)}\left|N(\mathfrak{g}) = 1\mbox{ and }\mathfrak{gvg}^\ast\in {\bigwedge}^1 V\, \mbox{ for all } \mathfrak{v}\in {\bigwedge}^1V\right.\right\}\\
				 &=\mbox{Pin}(n)\cap\clifford^+_{(p,q,r)}.
\end{align*}
Note that the main involution $\alpha$ hast no effect on the even subalgebra and therefore can be forgotten.
\begin{remark}
The Spin group is a subgroup of the Pin group. Therefore, the scalar product of vectors is preserved under the action of the Spin group. Spin group elements are generated by pairs of reflections, so they are rotations.
\end{remark}
\noindent
The Spin group for degenerated Clifford algebras as in the case of Euclidean geometry are semi-direct products of Spin groups for the non-degenerated part and an additive matrix group, see \cite{dereli:degenratespingroups}.% Later we will construct the Clifford algebra for the $3$-dimensional Euclidean space.
\subsection{Matrix representation of Clifford algebras}\label{matrixsection}
Following \cite{mccarty:theoreticalkinematics}, any multivector from a Clifford algebra over an $n$-dimensional vector space can be represented by a vector $v\in \mathds{R}^{2^n}$. Then the geometric product of two algebra elements $\mathfrak{ab=c}$ can be written as product of a vector and a matrix.
\begin{equation}\label{matrixrepresentation}
C=\left[A^+ \right] B,\quad C=\left[B^- \right] A, \quad \left[A^+ \right], \left[B^- \right]\in \mathds{R}^{2^n\times 2^n}.
\end{equation}
The columns of the matrix $\left[A^+ \right]$ are defined by the products 
\[\mathfrak{a}e_0,\mathfrak{a}e_1,\dots ,\mathfrak{a}e_n,\mathfrak{a}e_{12},\dots \mathfrak{a}e_{(n-1)n},\dots ,\mathfrak{a}e_{1\dots n}.\] Note that these columns are ordered from right to left to match the chosen ordering of the vector representation for the Clifford algebra. To obtain the columns of the matrix $\left[B^- \right]$ we have to multiply $\mathfrak{b}$ from the left side with each basis element \[e_0\mathfrak{b},e_1\mathfrak{b},\dots ,e_n\mathfrak{b},e_{12}\mathfrak{b},\dots e_{(n-1)n}\mathfrak{b},\dots ,e_{1\dots n}\mathfrak{b}\] again with the same ordering.
\begin{remark}
With the matrix representation it is possible to calculate the inverse element for an arbitrary multivector. Just express the geometric product as matrix vector product.
If the $2^n\times 2^n$ matrix is invertible, we see that the inverse algebra element corresponds to this matrix. For increasing vector space dimension this calculation can be extremely expensive.
\end{remark}
\section{Cayley-Klein geometries and the homogeneous model}
Cayley-Klein spaces are metric spaces constructed within an $n$-dimensional projective space $\mathds{P}^n(\mathds{R})$ with a distinguished quadratic hypersurface given by a quadratic form
\[x^T B x = 0,\]
where $B$ is a $n+1\times n+1$ symmetric matrix and $x\in \mathds{P}^n$. This idea goes back to A. Cayley and F. Klein. Comprehensive work on this field of geometry can be found in \cite{kowol,onishchik} and \cite{struve}. With Silvester's law of inertia we can always find a diagonal matrix corresponding to the quadratic hypersurface, see Section \ref{section1.1}. This diagonal form is called the normal form of the quadric hypersurface and corresponds to the signature of the Clifford algebra that we will use later on. The Cayley-Klein construction provides models for the Euclidean, hyperbolic, elliptic, and many other geometries. An exhaustive treatise of this topic can be found in \cite{giering:vorlesungenueberhoeheregeometrie}. The definition of a Cayley-Klein space that we will use can also be found therein.
\begin{definition}
Let $\mathds{P}^n(\mathds{R})$ be the $n$-dimensional projective space, then $\mathcal{F}\subset\mathds{P}^n(\mathds{C})$ is defined by the chain
\begin{equation*}%\label{label.2.16}
\mathcal{F}:Q_{r_0,q_0}^n\supset A^{n_1}\supset Q_{r_1,q_1}^{n_1-1}\supset \dots \supset A^{n_\rho}\supset Q_{r_\rho,q_\rho}^{n_\rho-1}.
\end{equation*}
Here $r_i$ is the rank, $q_i$ the index and $n_i-1$ the dimension of the quadratic variety $Q_{r_i,q_i}^{n_i-1}$ that is for $i<\rho$ a singular quadric and for $i=\rho$ a regular one. Furthermore, the identity $n+1=r_0+r_1+r_\rho$ holds. $\mathds{P}^n$ with absolute figure $\mathcal{F}$ is called a \emph{Cayley-Klein space}. The points contained in $\mathcal{F}$ are called \emph{ideal} points and the points in $\mathds{P}^n\backslash\mathcal{F}$ \emph{proper} points. $Q_{r_i,q_i}^{n_i-1}$ is called \emph{absolute quadric} or \emph{absolute cone}. $A^{n_i}$ is called \emph{absolute $n_i$-subspace}. A Cayley-Klein space where the set of points $W$ is removed is called \emph{sliced} along $W$.
\end{definition}
\begin{remark}
Points that are neither ideal nor proper are called \emph{improper} points. These occur for example in the case of the hyperbolic geometry.
\end{remark}
\noindent
The construction of a Clifford algebra over a projective space results in the so called \emph{homogeneous model}.
\subsection{Cayley-Klein spaces}
At first we present an example for a planar Cayley-Klein space. This means we construct the homogeneous model over $\mathds{R}^3$ with coordinates $x_0,x_1,x_2$ as vector space model for $\mathds{P}^2(\mathds{R})$ and an absolute figure. We start with the \emph{Euclidean plane}. For this purpose an absolute figure of the form
\begin{equation}\label{label.2.17}
\mathcal{F}:Q_{1,0}^{1}\supset A^1\supset Q_{2,0}^0
\end{equation}
is prescribed. The quadric can be written as $Q_{1,0}^{1}:x_0^2=0$. It is singular, thus cone shaped and has the vertex space $A^1:x_0=0$ which is a line. The regular quadric at the end of the chain  is given by $x_1^2+x_2^2=0$. The projective automorphisms of $\mathcal{F}$ form a subgroup $\pgl(\mathds{P}^2,\mathcal{F})\subset \pgl(\mathds{P}^2)$ and thus they constitute the group of motions in this Cayley-Klein geometry. In the Euclidean case this group is the group $\se(2)$ of planar Euclidean displacements. Now we construct the homogeneous model for this Cayley-Klein space. Therefore, we take $\mathds{R}^3$ as vector space and a quadratic form with signature $(2,0,1)$ to construct the Clifford algebra $\clifford_{(2,0,1)}$. Points $P=(x_0,x_1,x_2)^T \in\mathds{P}^2(\mathds{R})$ of the Euclidean plane are described in the ${\bigwedge}^2 V$ subspace of the algebra via $\mathfrak{p}=x_0e_{12}+x_1e_{13}+x_2e_{23}$. Note that the homogeneous factor of the point is contained in the $e_{12}$ component. The scalar product is calculated by $\mathfrak{p}\cdot \mathfrak{p}=-x_0^2$. Hence, we have the possibility to norm with operations in the algebra. Thus, we find Euclidean geometry considered as a Cayley-Klein geometry within this model. The group $\pgl(\mathds{P}^2,\mathcal{F})$ can be found in the Clifford algebra as the group $\mbox{Spin}_{(2,0,1)}$. In fact the Spin group is a double cover of the group $\pgl(\mathds{P}^2,\mathcal{F})$.
\paragraph{3-dimensional Euclidean space:} The $3$-dimensional Euclidean space is constructed in the same way. We start with $\mathds{P}^3(\mathds{R})$ and the absolute figure
\begin{equation}\label{3deuclidean}
\mathcal{F}:Q_{1,0}^{2}\supset A^2\supset Q_{2,0}^1,
\end{equation}
wherein the first singular quadric in the chain is given by $Q_{1,0}^{2}:x_0^2=0$ and its vertex is given by $A^2:x_0=0$. Contained in $A^2$ we have the so called \emph{absolute circle} $Q_{2,0}^1:x_1^2+x_2^2+x_3^2=0$. The subgroup $\pgl(\mathds{P}^3,\mathcal{F})\subset \pgl(\mathds{P}^3)$ that fixes this absolute figure can be identified as the group of spatial Euclidean displacements $\se(3)$. We aim at a representation of this Cayley-Klein geometry with a homogeneous Clifford algebra model. Therefore, we take $\mathds{R}^4$ as vector space model for $\mathds{P}^3(\mathds{R})$ together with a quadratic form of signature $(3,0,1)$ to obtain the Clifford algebra $\clifford_{(3,0,1)}$. Again the group $\mbox{Spin}_{(3,0,1)}$ is a double cover of the group $\se(3)$ of Euclidean motions, see for example \cite{selig:geometricfundamentalsofrobotics}.
\paragraph{Cayley-Klein geometry via homogeneous Clifford algebra models:} In general, a Cayley-Klein space can be described using the homogeneous Clifford algebra approach. All we need is the vector space model of a projective space and a quadratic form given by its signature. Of course, not every Cayley-Klein space can be described in this way, but for our purposes this is convenient. The $3$-dimensional galilei Cayley-Klein space serves as counterexample. It is given by its absolute figure
\[\mathcal{F}:Q_{1,0}^2\supset A^{2}\supset Q_{1,0}^{1}\supset A^{1}\supset Q_{2,0}^{0}.\]
The quadrics and the subspaces can be described by
\[Q_{1,0}^2:\, x_0^2=0,\,\, A^2:\, x_0=0,\,\, Q_{1,0}^1:\,x_1^2=0,\,\, A^1:\,x_1=0,\,\, Q_{2,0}^0:\,x_2^2+x_3^2=0.\]
It is not possible to describe this absolute figure with one quadratic form and therefore we conclude that there exists no homogeneous Clifford algebra representation for the galilei Cayley-Klein space.
\subsection{A homogeneous model for Euclidean geometry}
Now we construct the Clifford algebra for the $3$-dimensional Euclidean space. Therefore, we take $V=\mathds{R}^4$ as model for the $3$-dimensional projective space $\mathds{P}^3(\mathds{R})$. The signature of the Clifford algebra is $(3,0,1)$. Note that in the literature the signature $(0,3,1)$ is often used, because of the connection to dual quaternions, see \cite{selig:geometricfundamentalsofrobotics}. We will do the construction in a more natural way and take the scalar product belonging to Euclidean geometry. We have four basis elements $e_1,\dots, e_4$ with $e_1^2=e_2^2=e_3^2=1$ and $e_4^2=0$. The quadratic form is attached to the grade-1 space, therefore the points are described by grade-3 elements. This approach is called a plane based approach, see \cite{gunn:onthehomogeneousmodel}.  Grade-2 and grade-1 elements correspond to lines and planes, respectively. Note that not every grade-2 element belongs to a line.
The even part of the algebra $\clifford^+_{(3,0,1)}$ is spanned by $e_0,e_{12},e_{13},e_{14},e_{23},e_{24},e_{34},e_{1234}$ and the Spin group is a double cover of $\se(3)$. Note that in this case we have $\clifford^+_{(3,0,1)}\cong\clifford^+_{(0,3,1)}$.
\paragraph{Isomorphism to $\mathds{H}_d$:}
Dual quaternions denoted by $\mathds{H}_d$ are a well-known tool to describe Euclidean displacements in $3$-dimensional space, see \cite{husty:kinematik}. Here we will give a short introduction. Furthermore, we will give an isomorphism between $\clifford^+_{(3,0,1)}$ and $\mathds{H}_d$ as introduced by E. Study, cf. \cite{study:geometriederdynamen}.
\paragraph{Quaternions:} A quaternion, see Example \ref{example1},\ref{example2} and \ref{example3} is a hypercomplex number of the form
\begin{equation*}
\mathfrak{q}= a_0+a_1\mathbf{i}+a_2\mathbf{j}+a_3\mathbf{k},\mbox{ with } a_0,a_1,a_2,a_3\in\mathds{R}.
\end{equation*}
Quaternions were discovered by W.R. Hamilton in 1843. For the quaternion units the relations
\begin{equation}\label{label.2.18}
\mathbf{i}^2=\mathbf{j}^2=\mathbf{k}^2=\mathbf{i}\mathbf{j}\mathbf{k}=-1
\end{equation}
hold. Together with component-wise addition and multiplication defined by Eq.\ \eqref{label.2.18} the set $\mathds{H}$ of quaternions forms a skew field. For more details about quaternions and their relation to kinematics, see \cite{blaschke:kinematikundquaternionen}. 
\paragraph{Dual numbers:} Dual numbers form a commutative ring with unity. A dual number is defined by
\begin{equation*}
d=a+\epsilon b,\mbox{ with } a,b\in\mathds{R} \text{ and } \epsilon^2=0.
\end{equation*}
The set of dual numbers is denoted by $\mathds{D}$. Pure dual numbers have the form $\epsilon b$ and are zero divisors in the ring $\mathds{D}$. This can be seen easily since the dual unit $\epsilon$ commutes with real numbers $(\epsilon a)(\epsilon b)=\epsilon^2 ab=0$. In \cite{marova} dual numbers were efficiently used to extend several well-known theorems from elementary geometry to affine Cayley-Klein planes, see also \cite{weissmann}.
\paragraph{Dual quaternions:} Quaternions with dual number components are called dual quaternions and are denoted by
\begin{align*}%\label{label.2.19}
\mathds{H}_d:=&\left\lbrace a_0+a_1\mathbf{i}+a_2\mathbf{j}+a_3\mathbf{k}+\epsilon(c_0+c_1\mathbf{i}+c_2\mathbf{j}+c_3\mathbf{k})\right. \\&\left. \mid a_0,\dots ,a_3,c_0,\dots ,c_3\in\mathds{R} \right\rbrace.\nonumber
\end{align*}
The dual quaternions form an 8-dimensional vector space over the real numbers and the basis elements are $1,\mathbf{i},\mathbf{j},\mathbf{k},\epsilon,\epsilon\mathbf{i},\epsilon\mathbf{j},\epsilon\mathbf{k}$. We give an isomorphism between the even Clifford algebra $\clifford^+_{(3,0,1)}$ and the dual quaternions $\mathds{H}_d$. This isomorphism can also be found in \cite{selig:geometricfundamentalsofrobotics} and is given by its action on the basis:
\begin{align}\label{label.2.20}
e_0 &\mapsto 1,   &e_{23}\mapsto \mathbf{i}, &&e_{31}\mapsto \mathbf{j}, &&e_{12}\mapsto \mathbf{k},\\
 -e_{1234}&\mapsto \epsilon, &e_{14}\mapsto\epsilon\mathbf{i}, &&e_{24}\mapsto\epsilon\mathbf{j}, &&e_{34}\mapsto\epsilon\mathbf{k}.\nonumber
\end{align}
Displacements can be described by unit dual quaternions. A dual quaternion is normed, if it satisfies
\begin{align}\label{label.2.21}
a_0^2+a_1^2+a_2^2+a_3^2=1,\\
a_0c_0+a_1c_1+a_2c_2+a_3c_3=0.\label{label.2.22}
\end{align}
Unit dual quaternions are denoted with $\mathds{U}_d$ and form a group with respect to multiplication. More information can be found in \cite{husty:kinematik}. Furthermore, we can define a group isomorphism between the group of dual unit quaternions and $\mbox{Spin}_{(3,0,1)}$, if we restrict the isomorphism \eqref{label.2.20} to one of these groups in the image or pre-image.
\subsection{Poincar\'{e} Duality}
In the literature duality is often defined through multiplication with the pseudo scalar. When we deal with degenerated Clifford algebras this definition is not adequate. Therefore, we define duality by the so called Poincar\'{e} isomorphism. Gunn \cite{gunn:kinematics} uses this isomorphism to define projective meet and join operations.
\begin{definition}
The isomorphism 
\begin{align*}
J:\clifford_{(p,q,r)}&\to\clifford_{(p,q,r)},\\
e_{i_1\dots i_k}&\mapsto e_{I\backslash\left\lbrace i_1,\dots, i_k\right\rbrace },0\leq i_1<\dots <i_k\leq n,
\end{align*}
where $I$ is the ordered set $\left\lbrace 1,\dots,n\right\rbrace $ and $n=p+q+r$ the dimension of the vector space is called the Poincar\'{e} duality. Grade-$k$ elements are mapped to grade-$(n-k)$ elements.
\end{definition}
\begin{example}
If we apply this mapping to a grade-$2$ element $\mathfrak{p}=x_0e_{12}+x_1e_{23}+x_2e_{13}$ of $\clifford_{(2,0,1)}$ we get a grade-$1$ element $J(\mathfrak{p})=x_0e_{0}+x_1e_{1}+x_2e_{2}$. The scalar product \eqref{label.2.10} results in
\[J(\mathfrak{p})\cdot J(\mathfrak{p})=x_1^2+x_2^2.\]
This expression belongs to the norm of vectors in the Euclidean plane. Note that in this case $J(\mathfrak{p})^2=J(\mathfrak{p})\cdot J(\mathfrak{p})$. It can be interpreted as the squared distance between the origin and the hyperplane $J(\mathfrak{p})$. Therefore, the geometric entity belonging to $J(\mathfrak{p})$ can be interpreted as line, {\it i.e.}, a hyperplane in $\mathds{P}^2(\mathds{R})$.
\end{example}
\section{Kinematic mappings}
In this Section we give a short introduction to the concept of kinematic mappings. We will treat two important examples. Kinematic mappings map displacements to points in a certain space. The main advantage is to work with points instead of displacements.
\subsection{Study's kinematic mapping}
The $6$-dimensional group of Euclidean displacements in $3$-dimensional space can be mapped to a special hyperquadric $S_2^6$ in $\mathds{P}^7(\mathds{R})$ which is usually referred to as Study's quadric. Each displacement is represented by a point on this quadric, but not every point on $S_2^6$ belongs to a displacement. There exists a $3$-dimensional generator space $E^3\subset S_2^6$ whose points do not correspond to displacements. To see this we just have to look at Eq.\ \eqref{label.2.21} for a dual unit quaternion. If we use projective coordinates the first equation is not allowed to vanish. In fact this is the exceptional space $E^3$ given by $a_0=a_1=a_2=a_3=0$ whose points do not belong to any displacement. Note that $N_2^6:a_0^2+a_1^2+a_2^2+a_3^2=0$ is a singular quadric with 3-dimensional real vertex space $E^3$. Eq. \eqref{label.2.22} describes Study's quadric. Therefore, the point set $S_2^6\backslash E^3$ is the image of the Euclidean displacements. Thus, the image space is a sliced quadric $S_2^6\backslash E^3$, a pseudo algebraic variety, so to say. Hence, we have a bijective mapping
\begin{align}
\mathcal{S}:\se(3)&\rightarrow S_2^6\backslash E^3\subseteq\mathds{P}^7(\mathds{R}),\label{studymapping}\\
\se(3)\ni \alpha &\mapsto A=(a_0,\dots ,a_7)^T.\nonumber
\end{align}
The group of collineations in the image space that correspond to $\se(3)$ are the collineations $\pgl(\mathds{P}^7,\left[ S_2^6,N_2^6\right])$ that leave the pencil of quadrics $\left[ S_2^6,N_2^6\right]$ invariant. Note that the image space is not a Cayley-Klein space. On the one hand we could interpret the $7$-dimensional projective space with its absolute figure
\[\mathcal{F}:S_2^6\supset E^3\supset Q^2_{4,0}\]
as Cayley-Klein space when we use the real numbers as underlying field. On the other hand, if we look with complex glasses on the scene, the exceptional space $E^3$ becomes the vertex space of a singular quadric $N_2^6:\sum_{i=0}^3 a_i^2=0$, that belongs to the absolute figure. Furthermore, the quadric $Q_{4,0}^2$ is a quadric in the subspace $E^3$ given by the equation $c_0^2+c_1^2+c_2^2+c_3^2=0$.
\subsection{A mapping for planar displacements}
Here we present another kinematic mapping introduced by Blaschke \cite{blaschke:euklidischekinematik} and Gr\"unwald \cite{gruenwald:einabbildungsprinzip}. The interested reader is referred to \cite[Ch. 11]{bottema:theoreticalkinematics} for further information. Planar Euclidean motions can be factored in a rotation and a translation
\begin{equation*}
\begin{pmatrix}x'\\y'\end{pmatrix}=\left(\begin{array}{cr}\cos\varphi & -\sin\varphi\\\sin\varphi & \cos\varphi\end{array}\right)+\begin{pmatrix}a\\b\end{pmatrix}.
\end{equation*}
The kinematic mapping $\varkappa$ maps planar Euclidean displacements to points of $\mathds{P}^3(\mathds{R})$. Note that the homogeneous coordinates of points are related to Cartesian coordinates $(x,y,z)^T$ via
\[x=\frac{x_1}{x_0},\quad y=\frac{x_2}{x_0},\quad z=\frac{x_3}{x_0},\quad \mbox{if } x_0\neq 0.\]
Again not every point of the image space belongs to a displacement. In analogy to Section 3.1 the line $\ell:\,x_0=x_3=0$
 is the real vertex space of a reducible quadric $N_2^2:\,x_0^2+x_3^2=0$, which consists of the complex conjugate planes $x_0+ix_3=0$ and $x_0-ix_3=0$. Therefore, $\ell$ is the intersection line of these two planes.
To get a bijection we have to remove the line at infinity $\ell$ from the image space. Under these conditions, the map
\begin{align}
\varkappa:\se(2)&\rightarrow \mathds{P^3}(\mathds{R})\backslash\ell\label{blaschkemapping}\\
\se(2)\ni \alpha(\varphi,a,b)&\mapsto\left(2\cos\frac{\varphi}{2},a\sin\frac{\varphi}{2}-b\cos\frac{\varphi}{2},a\cos\frac{\varphi}{2}+b\sin\frac{\varphi}{2},2\sin\frac{\varphi}{2}\right)^T\nonumber
\end{align}
is one-to-one and onto. Hence, the image space is a quasi-elliptic space, cf. \cite{giering:vorlesungenueberhoeheregeometrie} with absolute figure
\[\mathcal{F}:Q^2_{2,0}\supset A^1\supset Q^0_{2,0},\]
\[Q_{2,0}^2:\,x_0^2+x_3^2=0,\,\, A^1:\,x_0=x_3=0,\,\,Q_{2,0}^0:\,x_1^2+x_2^2=0.\]
Planar Euclidean displacements can be described by a subgroup of the group $\mathds{U}_d$. For example: If we want to describe planar Euclidean displacements in the $\left[ x,y\right]$-plane we have to restrict the group $\mathds{U}_d$ to rotations around axes that are parallel to the $z$-axis and to translations in the $\left[ x,y\right]$-plane.
\section{Kinematic mappings via Clifford algebras}
Both examples presented in the previous Section can be treated together in the unifying and more general framework of Clifford algebras. In the following we shall explain how this works and illustrate the construction at hand of the previous examples. The Spin group corresponding to a certain homogeneous Clifford algebra model for a Cayley-Klein space is a double cover of the group of collineations that preserve the absolute figure $\mathcal{F}$. Therefore, we just have to look at the Spin group as a subset of a projective space of the dimension $2^n-1$. 
\subsection{Study's mapping via Clifford algebra}
Again we start with the homogeneous model for the $3$-dimensional Euclidean space $\clifford_{(3,0,1)}$. The Spin group is located in the even part $\clifford^+_{(3,0,1)}$ of the algebra and every Spin group element satisfies the condition $N(\mathfrak{g})=\mathfrak{gg}^\ast=1$. Thus, we take an arbitrary element of the even part of the algebra
\[\mathfrak{g}=a_{0}+a_{3}e_{12}+a_{2}e_{13}+c_{1}e_{14}+a_{1}e_{23}+c_{2}e_{24}+c_{3}e_{34}+c_{0}e_{1234}\]
and calculate the product with its conjugate element
\begin{equation}\label{label.4.1}
\mathfrak{gg}^\ast=(a_{0}^2+a_{1}^2+a_{2}^2+a_{3}^2)+2(c_{0}a_{0}-a_{1}c_{1}+c_{2}a_{2}-c_{3}a_{3})e_{1234}=1.
\end{equation}
Eq.\ \eqref{label.4.1} shows that $\mathfrak{g}\in\mbox{Spin}_{(3,0,1)}$, if the quadratic equation in the pseudo scalar part is equal to zero. With the isomorphism \eqref{label.2.20} we can see that this condition is exactly the equation of Study's quadric. The values $a_i,c_i,\. i=0,\dots,3$ are the coordinates of a Euclidean displacement, for
\begin{equation}\label{Study}
c_{0}a_{0}-a_{1}c_{1}+c_{2}a_{2}-c_{3}a_{3}=0.
\end{equation}
 Furthermore, the scalar part of Eq.\ \eqref{label.4.1} has to be equal to $1$. Now we assume that the $8$ components $a_0,\dots ,a_3,c_0,\dots ,c_3$ are coordinates of a $7$-dimensional projective space $\mathds{P}^7(\mathds{R})$. The condition in the scalar part can now be relaxed to
\[a_{0}^2+a_{1}^2+a_{2}^2+a_{3}^2\neq 0.\]
This equation can be identified as the condition that a point is not allowed to lie in the exceptional space $E^3$. Hence, we have found a mapping from $\mbox{Spin}_{(3,0,1)}\rightarrow S_2^6\backslash E^3$, compare to Eq. \eqref{studymapping}.
Note that $\mathfrak{g}$ and $-\mathfrak{g}$ represent the same displacement.
\paragraph{Matrices of displacements:}
In this paragraph we show how to describe the collineations of $\mathds{P}^3(\mathds{R})$ that preserve the absolute figure \eqref{3deuclidean} of the Euclidean space seen as Cayley-Klein space. To do so, we look at the action of the Spin group on a point $P=(x_0,x_1,x_2,x_3)^T$ that is represented by a grade-3 element \[\mathfrak{p}=x_0e_{123}+x_1e_{234}+x_2e_{134}+x_3e_{124}.\] An arbitrary element of the Spin group is given by 
\[\mathfrak{g}=a_{0}+a_{1}e_{23}+a_{2}e_{13}+a_{3}e_{12}+c_{0}e_{1234}+c_{1}e_{14}+c_{2}e_{24}+c_{3}e_{34},\quad \mathfrak{gg}^\ast=1.\]
In the projective representation the condition $\mathfrak{gg}^\ast=1$ changes to $\sum_{i=0}^3{a_i^2}\neq 0$ and further we have $a_0c_0-a_1c_1+a_2c_2-a_3c_3=0$. The effect of a Spin group element can be written in terms of the sandwich operator
\begin{align*}
\mathfrak{gpg}^\ast&=( a_0^2+a_1^2+a_2^2+a_3^2)x_0e_{123}\\
&+\big(2(-a_0c_1-a_1c_0-a_2c_3-a_3c_2)x_0+(a_0^2+a_1^2-a_2^2-a_3^2)x_1\\
&+2(a_2a_1-a_0a_3)x_2+2(a_0a_2+a_3a_1)x_3\big)e_{234}\\
&+\big(2(a_0c_2+a_1c_3-a_2c_0-a_3c_1)x_0+2(a_0a_3+a_2a_1)x_1\\
&+(a_0^2-a_1^2+a_2^2-a_3^2)x_2+2(a_3a_2-a_0a_1)x_3\big)e_{134}\\
&+\big((2a_2c_1+2a_1c_2-2a_0c_3-2a_3c_0)x_0+2(a_3a_1-a_0a_2)x_1\\
&+2(a_3a_2+a_0a_1)x_2+(a_0^2-a_1^2-a_2^2+a_3^2)x_3\big)e_{124}.
\end{align*}
Here we see that the action of $\mathfrak{g}$ on $\mathfrak{p}$ is linear. Hence, we can write this action as a product of a matrix with a vector
\[\begin{pmatrix}x_0\\x_1\\x_2\\x_3\end{pmatrix}'=\frac{1}{\Delta}\begin{pmatrix}
 \Delta& 0 & 0 & 0\\
  l & a_{0}^2+a_{1}^2-a_{2}^2-a_{3}^2 & 2(a_{2}a_{1}-a_{0}a_{3}) & 2(a_{0}a_{2}+a_{3}a_{1})\\
  m & 2(a_{0}a_{3}+a_{2}a_{1}) & a_{0}^2-a_{1}^2+a_{2}^2-a_{3}^2 & 2(a_{3}a_{2}-a_{0}a_{1})\\
  n & 2(a_{3}a_{1}-a_{0}a_{2}) & 2(a_{3}a_{2}+a_{0}a_{1}) & a_{0}^2-a_{1}^2-a_{2}^2+a_{3}^2
\end{pmatrix}\begin{pmatrix}x_0\\x_1\\x_2\\x_3\end{pmatrix}\]
with
\begin{align*}
\Delta&=a_{0}^2+a_{1}^2+a_{2}^2+a_{3}^2, &l&=2(-a_{0}c_{1}-a_{3}c_{2}-a_{2}c_{3}-a_{1}c_{0}),\\
m&=2(a_{0}c_{2}+a_{1}c_{3}-a_{2}c_{0}-a_{3}c_{1}), &n&=2(a_{2}c_{1}+a_{1}c_{2}-a_{0}c_{3}-a_{3}c_{0}).
\end{align*}
The factor $\frac{1}{\Delta}$ guarantees that the right lower $3\times 3$ submatrix is in $\so(3)$.
\paragraph{Collineations in the image space:}
Transformations in the underlying geometry induce collineations in the kinematic image space. Therefore, we are interested in the matrix representation of Spin group elements, see Eq. \eqref{matrixrepresentation}. Thus, we take an element from $\mbox{Spin}_{(3,0,1)}$ \[\mathfrak{g}=a_{0}+a_{1}e_{23}+a_{2}e_{13}+a_{3}e_{12}+c_{0}e_{1234}+c_{1}e_{14}+c_{2}e_{24}+c_{3}e_{34}.\]
When using homogeneous coordinates the norming condition becomes\linebreak
$\sum_{i=0}^{3}{a_i^2}\neq 0$. Furthermore, the parameters $(a_0,\dots ,a_3,c_0,\dots ,c_3)$ have to satisfy the condition \eqref{Study}. We get
\begin{align*}
\mathfrak{g}e_0&=  a_{0} e_0 + a_{3} e_{12} + a_{2} e_{13} + c_{1} e_{14} + a_{1} e_{23} + c_{2} e_{24}
+ c_{3} e_{34} + c_{0} e_{1234},\\
\mathfrak{g}e_{23}&=  -a_{1} e_0 - a_{2} e_{12} + a_{3} e_{13} - c_{0} e_{14} + a_{0} e_{23} - c_{3} e_{24}
+ c_{2} e_{34} + c_{1} e_{1234},\\
\mathfrak{g}e_{13}&=  -a_{2} e_0 + a_{1} e_{12} + a_{0} e_{13} - c_{3} e_{14} - a_{3} e_{23} + c_{0} e_{24}
+ c_{1} e_{34} - c_{2} e_{1234},\\
\mathfrak{g}e_{12}&=  -a_{3} e_0 + a_{0} e_{12} - a_{1} e_{13} - c_{2} e_{14} + a_{2} e_{23} + c_{1} e_{24}
- c_{0} e_{34} + c_{3} e_{1234},\\
\mathfrak{g}e_{1234}&= -a_{1} e_{14} + a_{2} e_{24} - a_{3} e_{34} + a_{0} e_{1234},\\
\mathfrak{g}e_{14}&=    a_{0} e_{14} - a_{3} e_{24} - a_{2} e_{34} + a_{1} e_{1234},\\
\mathfrak{g}e_{24}&=    a_{3} e_{14} + a_{0} e_{24} - a_{1} e_{34} - a_{2} e_{1234},\\
\mathfrak{g}e_{34}&=  a_{2} e_{14} + a_{1} e_{24} + a_{0} e_{34} + a_{3} e_{1234}.
\end{align*}
If we write these equations in matrix form we obtain a representation of collineations of $\mathds{P}^7(\mathds{R})$ that preserve the pencil of quadrics spanned by $\left[S_2^6,N_2^6\right]$:
\[\left[ G^+\right]= \begin{pmatrix}
 a_0&  -a_1& -a_2& -a_3& 0   &    0 &0    &0   \\
 a_1& a_0  & -a_3&  a_2& 0   &    0 &   0 &    0 \\
 a_2& a_3  & a_0 &-a_1 & 0   &    0 &   0 &    0 \\
 a_3&  -a_2& a_1 & a_0 & 0   &    0 &   0 &   0 \\
 c_0&  c_1 &-c_2 & c_3 & a_0 & a_1  & -a_2& a_3 \\
 c_1& -c_0 & -c_3&-c_2 & -a_1& a_0  &  a_3& a_2\\
 c_2&  -c_3& c_0 &  c_1& a_2 & -a_3 & a_0 & a_1\\
 c_3 & c_2 &c_1  & -c_0& -a_3&  -a_2& -a_1&  a_0
\end{pmatrix}.\]
In order to write $\left[ G^+ \right] $ in the usual form, see \cite{pfurner:analysis}, the isomorphism \eqref{label.2.20} has to be used.
\subsection{Blaschke's and Gr\"unwald's mapping via Clifford algebra}
For planar Euclidean geometry the homogeneous Clifford algebra model $\clifford_{(2,0,1)}$ is adequate. We find the Spin group in the even part $\clifford^+_{(2,0,1)}$. A general element of the even part is given by
\[\mathfrak{g}=a_0e_0+a_{1}e_{12}+c_0e_{23}+c_1e_{13}.\]
The condition that this element is a Spin group element reads now
\[\mathfrak{gg}^\ast=a_0^2+a_1^2=1.\]
In this case we have just one equation in the scalar part. If we change to projective coordinates, the condition gets $a_0^2+a_1^2\neq 0$. Again we have the equation of a pair of complex conjugate planes intersecting in a real line. This results in the same image space as in the previous Section, see Eq. \eqref{blaschkemapping}.
\paragraph{Matrices of planar Euclidean displacements:}
With the Spin group we can find the matrix representation corresponding to the group $\se(2)$ as collineation group fixing the absolute figure \eqref{label.2.17}. All we have to do is to apply the sandwich operator to a point. As we choose a plane based construction a point $P=(x_0,x_1,x_2)$ can be described in the algebra $\clifford_{(2,0,1)}$ via $\mathfrak{p}=x_0e_{12}+x_1e_{23}+x_2e_{13}$. A general element of the Spin group is given by $\mathfrak{g}=a_0e_0+a_{1}e_{12}+c_0e_{23}+c_1e_{13}$. Applying the sandwich operator to $\mathfrak{p}$ results in
\begin{align*}
\mathfrak{gpg}^\ast&=\big(a_0^2+a_1^2\big)x_0e_{12}+\big(2(a_1c_0+a_0c_1)x_0+(a_0^2-a_1^2)x_1-2a_0a_1x_2\big)e_{23}\\
&+ \big(2(a_1c_1-a_0c_0)x_0+2a_0a_1x_1+(a_0^2+a_1^2)x_2\big)e_{13}.
\nonumber\end{align*}
If we rewrite the effect of this transformation as linear transformation of the projective plane, we get
\begin{equation}\label{label.4.3}
\begin{pmatrix}x_0\\x_1\\x_2\end{pmatrix}'=\frac{1}{a_0^2+a_{1}^2}
\begin{pmatrix} 
a_{0}^2+a_{1}^2 & 0 & 0\\
2(a_{1}c_{0}+a_{0}c_{1}) & a_{0}^2-a_{1}^2 & -2a_{0}a_{1}\\
2(a_{1}c_{1}-c_{0}a_{0}) & 2a_{0}a_{1} & a_{0}^2-a_{1}^2
\end{pmatrix}\cdot
\begin{pmatrix} x_0\\x_1\\x_2\end{pmatrix}.
\end{equation}
In the first line of Eq. \eqref{label.4.3} we see that the line at infinity is fixed under these collineations. It is clear that all planar Euclidean displacements are obtained in this way, because the Spin group is a double cover of $\se(2)$. The factor is added 
artificially to guarantee that we just look at matrices belonging to the Spin group. These two examples show that the Spin group can be used to get the matrix group for a Cayley-Klein geometry, if it is representable as homogeneous Clifford algebra model.
\paragraph{Collineations in the image space:}
The Spin group elements induce in the image space a group of transformations that fix the pair of complex conjugate planes, their intersection line and the two complex conjugate points on this line. To get this group we have to look at the matrix representation of the Spin group. Let $\mathfrak{g}=a_0e_0+a_1e_{12}+c_0e_{23}+c_1e_{13}\in\mbox{Spin}_{(2,0,1)}$. Calculating the matrix group as described in Section \ref{matrixsection} results in
\[\pgl(\mathds{P}^2(\mathds{R}),\mathcal{F})=\left\lbrace \begin{pmatrix} a_0 & -a_1 &0 &0\\
a_1& a_0&0&0\\c_1&-c_0&a_0&a_1\\c_0&c_1&-a_1&a_0 
\end{pmatrix}|a_0,a_1,c_0,c_1\in\mathds{R}\right\rbrace.\]
\section{Kinematic mappings for other Cayley-Klein spaces}
This procedure can be applied to homogeneous Clifford algebra models of any signature. We will present one more example here. For the remaining signatures, see Table \ref{table1} and Table \ref{table2}. Note that it makes no sense to look at the completely degenerate Clifford algebra $\clifford_{(0,0,3)}$ because the scalar product of any two vectors is zero.
\subsection{Two-dimensional Cayley-Klein spaces}
\paragraph{Elliptic plane:}
The homogeneous model for the elliptic plane is given by\linebreak $\clifford_{(3,0,0)}$. Let $\mathfrak{g}=a_0e_0+a_1e_{12}+c_0e_{23}+c_1e_{13}$ be an element of the even part $\clifford^+_{(3,0,0)}$. The product of this element with its conjugate element yields
\[\mathfrak{gg}^\ast=a_0^2+a_1^2+c_0^2+c_1^2=1.\]
In homogeneous coordinates this condition gets $a_0^2+a_1^2+c_0^2+c_1^2\neq 0$. Therefore, we can interpret the kinematic image of the elliptic congruences as non-degenerated Cayley-Klein space with absolute figure $\mathcal{F}:Q^2_{4,0}$. Furthermore, this means, if we work over the real numbers every point has a pre-image. In Table \ref{table1} the absolute figures $\mathcal{F}$ and the corresponding Clifford algebras for possible $2$-dimensional Cayley-Klein spaces are given.
{\linespread{1.2}
\begin{table}[h]
\begin{center}
\begin{tabular}{|c|c|c|}
\hline 
\textbf{CK space} & \textbf{absolute Figure} $\mathcal{F}$ & \textbf{Clifford algebra} \\ 
\hline 
elliptic & $Q_{3,0}^1$ & $\clifford_{(3,0,0)},\,\clifford_{(0,3,0)}$ \\ 
\hline 
hyperbolic & $Q_{3,1}^1$ & $\clifford_{(2,1,0)},\,\clifford_{(1,2,0)}$ \\ 
\hline 
Euclidean & $Q_{1,0}^1\supset A^1\supset Q_{2,0}^0$ & $\clifford_{(2,0,1)},\,\clifford_{(0,2,1)}$ \\ 
\hline 
pseudo-Euclidean & $Q_{1,0}^1\supset A^1\supset Q_{2,1}^0$ & $\clifford_{(1,1,1)}$ \\ 
\hline 
quasi-elliptic & $Q_{2,0}^1\supset A^0\supset Q_{1,0}^{-1}$ & $\clifford_{(2,0,1)},\,\clifford_{(0,2,1)}$ \\ 
\hline 
quasi-hyperbolic & $Q_{2,1}^1\supset A^0\supset Q_{1,0}^{-1}$ & $\clifford_{(1,1,1)}$ \\ 
\hline 
totally isotr. space & $Q_{1,0}^1\supset A^1\supset Q_{1,0}^0\supset A^0\supset Q_{1,0}^{-1}$ & $\clifford_{(1,0,2)},\,\clifford_{(0,1,2)}$ \\ 
\hline 
\end{tabular} 
\caption{Planar Cayley-Klein geometries and belonging Clifford algebras}
\label{table1}
\end{center}
\end{table}}
\FloatBarrier
\noindent
Note that the points of the Cayley-Klein geometry are always written in the grade-2 subspace of the Clifford algebra except the quasi-elliptic and the quasi-hyperbolic case. In these two cases we use the Poincaré isomorphism to describe these Cayley-Klein geometries as dual partners of the Euclidean respectively, the pseudo-Euclidean Cayley-Klein geometry. Therefore, points of the quasi-elliptic (quasi-hyperbolic) Cayley-Klein geometry are represented as grade-1 elements in the algebra corresponding to the Euclidean (pseudo-Euclidean) Cayley-Klein space. Table \ref{table2} shows the kinematic image spaces of the seven planar Cayley-Klein spaces. Note that every image space is again a Cayley-Klein space.
{\linespread{1.2}
\begin{table}[h]
\begin{center}
\begin{tabular}{|c|c|c|}
\hline 
\textbf{pre-image} & \textbf{CK image space} & \textbf{absolute Figure} $\mathcal{F}$ \\ 
\hline 
elliptic & elliptic & $Q_{4,0}^2$ \\ 
\hline 
hyperbolic & hyperbolic idx. $1$ & $Q_{4,2}^2$ \\ 
\hline 
Euclidean & quasi-elliptic & $Q_{2,0}^2\supset A^1\supset Q_{2,0}^0$ \\ 
\hline 
pseudo-Euclidean & quasi-hyperbolic idx. $0$ & $Q_{2,1}^2\supset A^1\supset Q_{2,1}^0$ \\ 
\hline 
quasi-elliptic & quasi-elliptic & $Q_{2,0}^2\supset A^1\supset Q_{2,0}^0$ \\ 
\hline 
quasi-hyperbolic & quasi-hyperbolic idx. $0$ & $Q_{2,1}^2\supset A^1\supset Q_{2,1}^0$ \\ 
\hline 
totally isotr. space& totally isotr. space & $Q_{1,0}^2\supset A^2\supset Q_{2,0}^1\supset A^0\supset Q_{1,0}^{-1}$ \\ 
\hline 
\end{tabular}
\caption{Kinematic image spaces of $2$-dimensional Cayley-Klein geometries presented as $3$-dimensional Cayley-Klein spaces with their absolute figure $\mathcal{F}$}
\label{table2} 
\end{center}
\end{table}}

\subsection{Three-dimensional Cayley-Klein spaces}

The kinematic image space for the $3$-dimensional Euclidean space was constructed in Section 4.1. Here we want to repeat this construction for the elliptic $3$-dimensional Cayley-Klein space. In \cite[Ch. 11]{selig:geometricfundamentalsofrobotics} the author showed that elements of the group $\so(4)$ can be identified with points of Study's quadric. In this case no exceptions have to be made. Therefore, the kinematic mapping is one-to-one and onto. Let us look what happens, if we construct the kinematic mapping belonging to the $3$-dimensional elliptic Cayley-Klein geometry which is modelled through $\clifford_{(4,0,0)}$. This Clifford algebra is $16$-dimensional and the even part $\clifford^+_{(4,0,0)}$ is $8$-dimensional. An arbitrary Spin group element is given by
\[\mathfrak{g}=a_{0}+a_{1}e_{23}+a_{2}e_{13}+a_{3}e_{12}+c_{0}e_{1234}+c_{1}e_{14}+c_{2}e_{24}+c_{3}e_{34}.\]
The condition that it is an element of the Spin group reads
\[\mathfrak{gg}^\ast=(a_0^2+a_1^2+a_2^2+a_3^2+c_0^2+c_1^2+c_2^2+c_3^2)e_0+(a_0c_0-a_1c_1+a_2c_2-a_3c_3)e_{1234}=1.\]
If we change to $7$-dimensional projective space the first condition is that the scalar part should not vanish
\[a_0^2+a_1^2+a_2^2+a_3^2+c_0^2+c_1^2+c_2^2+c_3^2\neq 0.\]
As in the case for Euclidean displacements this equation defines an exceptional set, {\it i.e.}, points that do not correspond to a pre-image. Here we can write this exceptional set as hyper quadric
\[Q_2^6:a_0^2+a_1^2+a_2^2+a_3^2+c_0^2+c_1^2+c_2^2+c_3^2=0.\]
The term in the pseudo scalar part is again the equation of Study's quadric
\[S_2^6:a_0c_0-a_1c_1+a_2c_2-a_3c_3=0.\]
Every point on Study's quadric stands for an element of $\so(4)$ since $\mbox{Spin}_{(4,0,0)}$ is a double cover of $\so(4)$. In this case it is not necessary to slice the quadric, since $Q_2^6$ has no real point. Furthermore, this means the pencil of quadrics spanned by $\left[S_2^6,Q_2^6 \right] $ is the absolute figure of the kinematic image space. Surprisingly, every kinematic mapping of $3$-dimensional Cayley-Klein spaces maps the Spin group elements to points on Study's quadric. In the following we will list the possible Cayley-Klein spaces and the absolute quadric pencil in the kinematic image space.
In Table \ref{table3} the other possible homogeneous Clifford algebra models are presented. Note that every Cayley-Klein space is self-dual except the Euclidean and the pseudo-Euclidean. Their dual Cayley-Klein spaces can be represented in the dual homogeneous Clifford algebra model obtained through the Poincaré duality.
\FloatBarrier
{\linespread{1.2}
\begin{table}[hbt]
\begin{center}
\begin{tabular}{|c|c|c|c|}
\hline 
\textbf{CK-space} & $\mathbf{\clifford_{(p,q,r)}}$ & $\mathbf{Q_2^6}$  \\ 
\hline 
elliptic & $\clifford_{(4,0,0)},\,\clifford_{(0,4,0)}$ & $\sum_{i=0}^3 a_i^2+\sum_{i=0}^3 c_i^2$  \\ 
\hline 
hyperbolic idx. 0 & $\clifford_{(3,1,0)},\,\clifford_{(1,3,0)}$ & $\sum_{i=0}^3 a_i^2-\sum_{i=0}^3 c_i^2$  \\ 
\hline 
hyperbolic idx. 1 & $\clifford_{(2,2,0)}$ & $a_0^2-a_1^2-a_2^2+a_3^2+c_0^2-c_1^2-c_2^2+c_3^2$ \\ 
\hline 
Eucledian & $\clifford_{(3,0,1)},\,\clifford_{(0,3,1)}$ & $\sum_{i=0}^3 a_i^2$  \\ 
\hline 
pseudo-Euclidean & $\clifford_{(2,1,1)},\,\clifford_{(1,2,1)}$ & $a_0^2-a_1^2-a_2^2+a_3^2$\\ 
\hline 
quasi-elliptic & $\clifford_{(2,0,2)},\,\clifford_{(0,2,2)}$ & $a_0^2+a_3^2$ \\ 
\hline 
quasi-hyperbolic idx. 0 & $\clifford_{(1,1,2)}$ & $a_0^2-a_3^2$ \\ 
\hline 
double isotr. flagspace & $\clifford_{(1,0,3)},\,\clifford_{(0,1,3)}$ & $a_0^2$  \\ 
\hline 
\end{tabular} 
\caption{3-dimensional Cayley-Klein spaces with possible Clifford algebra representation and exceptional quadric $Q_2^6$}
\label{table3}
\end{center}
\end{table}}
\FloatBarrier
\subsection{Higher dimensional kinematic mappings}
The procedure presented above can be generalized to arbitrary dimensions. Here we present just one example, the $4$-dimensional Euclidean space. Written as homogeneous Clifford algebra model we get $\clifford_{(4,0,1)}$. The dimension of the algebra is $32$ and therefore the dimension of the even part is $16$. Now we take an arbitrary element of the even part and formulate the conditions that it is element of $\mbox{Spin}_{(4,0,1)}$. Note that in this case two conditions are necessary, {\it i.e.}, $\mathfrak{gg}^\ast=\mathfrak{g}^\ast \mathfrak{g}=1$. The element has the form
\begin{align*}
\mathfrak{g} &=a_0e_0+a_1e_{23}+a_2e_{13}+a_3e_{12}+a_4e_{15}+a_5e_{45}+a_6e_{25}+a_7e_{35}\\
&+c_1e_{14}+c_2e_{24}+c_3e_{34}+c_4e_{1235}+c_5e_{1345}+c_6e_{1245}+c_7e_{2345}+c_0e_{1234}.
\end{align*}
The product $\mathfrak{gg}^\ast$ calculates to
\begin{align}\label{label5.1}
\mathfrak{gg}^\ast &=(c_{2}^2+c_{3}^2+a_{2}^2+a_{3}^2+a_{0}^2+c_{1}^2+a_{1}^2+c_{0}^2)e_0\\
&+2(c_{0}a_{0}-a_{1}c_{1}-c_{3}a_{3}+c_{2}a_{2})e_{1234}\nonumber\\
&+2(c_{7}c_{1}+c_{4}a_{0}-c_{5}c_{2}-c_{0}a_{5}+a_{6}a_{2}-a_{1}a_{4}+c_{6}c_{3}-a_{7}a_{3})e_{1235}\nonumber\\
&+2(c_{5}a_{1}+a_{6}c_{1}-a_{5}a_{3}-c_{2}a_{4}-c_{7}a_{2}+c_{6}a_{0}+c_{0}a_{7}-c_{4}c_{3})e_{1245}\nonumber\\
&+2(c_{4}c_{2}-c_{6}a_{1}-a_{5}a_{2}-c_{3}a_{4}-c_{0}a_{6}+c_{5}a_{0}+c_{7}a_{3}+a_{7}c_{1})e_{1345}\nonumber\\
&+2(c_{6}a_{2}+c_{0}a_{4}-c_{4}c_{1}-c_{5}a_{3}+a_{7}c_{2}-c_{3}a_{6}+c_{7}a_{0}-a_{5}a_{1})e_{2345}.\nonumber
\end{align}
Furthermore we have to calculate $\mathfrak{g}^\ast \mathfrak{g}$
\begin{align}\label{label5.2}
\mathfrak{g^\ast g}&=(a_{1}^2+c_{2}^2+c_{3}^2+c_{0}^2+c_{1}^2+a_{3}^2+a_{2}^2+a_{0}^2)e_0\\
&+2(c_{2}a_{2}-c_{3}a_{3}+c_{0}a_{0}-a_{1}c_{1})e_{1234}\nonumber\\
&+2(c_{4}a_{0}-a_{7}a_{3}-c_{7}c_{1}+c_{0}a_{5}-a_{1}a_{4}+a_{6}a_{2}-c_{6}c_{3}+c_{5}c_{2})e_{1235}\nonumber\\
&+2(c_{7}a_{2}+a_{6}c_{1}-c_{5}a_{1}-a_{5}a_{3}+c_{6}a_{0}-c_{2}a_{4}+c_{4}c_{3}-c_{0}a_{7})e_{1245}\nonumber\\
&+2(c_{0}a_{6}-c_{3}a_{4}-c_{4}c_{2}+c_{6}a_{1}-c_{7}a_{3}+a_{7}c_{1}+c_{5}a_{0}-a_{5}a_{2})e_{1345}\nonumber\\
&+2(a_{7}c_{2}+c_{4}c_{1}-a_{5}a_{1}+c_{5}a_{3}-c_{6}a_{2}-c_{3}a_{6}-c_{0}a_{4}+c_{7}a_{0})e_{2345}.\nonumber
\end{align}
Eq. \eqref{label5.1} and \eqref{label5.2} are the conditions for an even grade element to lay in the Spin group. All coefficients of grade-4 elements have to vanish and the scalar part must be different from zero. Some quadric equations occur in both expressions. All in all the kinematic image space is a projective variety in $\mathds{P}^{15}(\mathds{R})$ sliced along a quadric. It can be written as the intersection of nine quadrics minus one quadric and therefore, as pseudo algebraic variety:
\[\mathcal{V}:\bigcap\limits_{i=1}^9{Q_i}\backslash N_1 \subseteq\mathds{P}^{15}(\mathds{R}),\]
with
\begin{align*}
N_1 &:c_{2}^2+c_{3}^2+a_{2}^2+a_{3}^2+a_{0}^2+c_{1}^2+a_{1}^2+c_{0}^2=0,\\
Q_1&:c_{0}a_{0}-a_{1}c_{1}-c_{3}a_{3}+c_{2}a_{2}=0,\\
Q_2&:c_{7}c_{1}+c_{4}a_{0}-c_{5}c_{2}-c_{0}a_{5}+a_{6}a_{2}-a_{1}a_{4}+c_{6}c_{3}-a_{7}a_{3}=0,\\
Q_3&:c_{5}a_{1}+a_{6}c_{1}-a_{5}a_{3}-c_{2}a_{4}-c_{7}a_{2}+c_{6}a_{0}+c_{0}a_{7}-c_{4}c_{3}=0,\\
Q_4&:c_{4}c_{2}-c_{6}a_{1}-a_{5}a_{2}-c_{3}a_{4}-c_{0}a_{6}+c_{5}a_{0}+c_{7}a_{3}+a_{7}c_{1}=0,\\
Q_5&:c_{6}a_{2}+c_{0}a_{4}-c_{4}c_{1}-c_{5}a_{3}+a_{7}c_{2}-c_{3}a_{6}+c_{7}a_{0}-a_{5}a_{1}=0,\\
Q_6&:c_{4}a_{0}-a_{7}a_{3}-c_{7}c_{1}+c_{0}a_{5}-a_{1}a_{4}+a_{6}a_{2}-c_{6}c_{3}+c_{5}c_{2}=0,\\
Q_7&:c_{7}a_{2}+a_{6}c_{1}-c_{5}a_{1}-a_{5}a_{3}+c_{6}a_{0}-c_{2}a_{4}+c_{4}c_{3}-c_{0}a_{7}=0,\\
Q_8&:c_{0}a_{6}-c_{3}a_{4}-c_{4}c_{2}+c_{6}a_{1}-c_{7}a_{3}+a_{7}c_{1}+c_{5}a_{0}-a_{5}a_{2}=0,\\
Q_9&:a_{7}c_{2}+c_{4}c_{1}-a_{5}a_{1}+c_{5}a_{3}-c_{6}a_{2}-c_{3}a_{6}-c_{0}a_{4}+c_{7}a_{0}=0.
\end{align*}
\begin{remark}
With this method it is also possible to construct the mapping $\so(3)\to S^3$. In this case we do not need the homogeneous Clifford algebra model. Thus, we take the Clifford algebra $\clifford_{(3,0,0)}$ and do the same construction for a Spin group element.
\end{remark}
\section{Projective varieties via kinematic algebra elements}
Here, we present another method to construct projective varieties belonging to the Spin group via Clifford algebra. Therefore, a definition is needed.
\begin{definition}\label{DEF3}
An element $\mathfrak{g}$ of a Clifford algebra $\clifford_{(p,q,r)}$ is called \emph{kinematic}, if it fulfils the following equation:
\[\mathfrak{g}^2=tr(\mathfrak{g})\mathfrak{g}-N(\mathfrak{g}),\]
where $tr(\mathfrak{g})=\mathfrak{g+g^\ast}$ is the trace of the element.
\end{definition}
\noindent
Definition \ref{DEF3} generalizes the definition of kinematic algebras over fields, see \cite{karzel:kinematischeAlgebren}.
To obtain projective varieties by using this definition, we first show that every Spin group element is kinematic.
\begin{lemma}
Spin group elements $\mathfrak{g}\in\clifford_{(p,q,r)}$ are kinematic, {\it i.e.}, they fulfil the equation
\[\mathfrak{g}^2=tr(\mathfrak{g})\mathfrak{g}-N(\mathfrak{g}).\]
\end{lemma}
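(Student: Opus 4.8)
The plan is to reduce the asserted identity to the single statement that a Spin group element commutes with its conjugate, and then to obtain that commutation directly from the defining norm condition $N(\mathfrak{g})=1$. No hard computation is involved; the whole content is one expansion of the geometric product together with a standard fact about inverses.

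First I would expand the right-hand side using $tr(\mathfrak{g})=\mathfrak{g}+\mathfrak{g}^\ast$ and $N(\mathfrak{g})=\mathfrak{g}\mathfrak{g}^\ast$. Since $tr(\mathfrak{g})\mathfrak{g}$ is an ordinary geometric product, this gives
\[
tr(\mathfrak{g})\mathfrak{g}-N(\mathfrak{g})=(\mathfrak{g}+\mathfrak{g}^\ast)\mathfrak{g}-\mathfrak{g}\mathfrak{g}^\ast=\mathfrak{g}^2+\mathfrak{g}^\ast\mathfrak{g}-\mathfrak{g}\mathfrak{g}^\ast.
\]
Hence the claim $\mathfrak{g}^2=tr(\mathfrak{g})\mathfrak{g}-N(\mathfrak{g})$ is \emph{equivalent} to $\mathfrak{g}^\ast\mathfrak{g}=\mathfrak{g}\mathfrak{g}^\ast$. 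I would stress that nowhere does this require $tr(\mathfrak{g})$ to be a scalar, so the reduction is valid for an arbitrary even element and the problem collapses to showing that $\mathfrak{g}$ and $\mathfrak{g}^\ast$ commute.

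Next I would invoke that $\mathfrak{g}\in\mbox{Spin}_{(p,q,r)}$ satisfies $\mathfrak{g}\mathfrak{g}^\ast=N(\mathfrak{g})=1$ by definition, which exhibits $\mathfrak{g}^\ast$ as a right inverse of $\mathfrak{g}$. Because $\clifford_{(p,q,r)}$ is a finite-dimensional associative algebra, a one-sided inverse is automatically two-sided: the map $x\mapsto x\mathfrak{g}$ is injective (if $x\mathfrak{g}=0$ then $x=x\mathfrak{g}\mathfrak{g}^\ast=0$), hence surjective, so some $y$ satisfies $y\mathfrak{g}=1$, and then $y=y(\mathfrak{g}\mathfrak{g}^\ast)=(y\mathfrak{g})\mathfrak{g}^\ast=\mathfrak{g}^\ast$, giving $\mathfrak{g}^\ast\mathfrak{g}=1$. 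Thus $\mathfrak{g}^\ast\mathfrak{g}=1=\mathfrak{g}\mathfrak{g}^\ast$, the two products coincide, and substituting back yields $tr(\mathfrak{g})\mathfrak{g}-N(\mathfrak{g})=\mathfrak{g}^2+1-1=\mathfrak{g}^2$, which is the assertion.

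The only genuinely nontrivial point — the ``hard part,'' such as it is — is the passage from the one-sided relation $\mathfrak{g}\mathfrak{g}^\ast=1$ to the two-sided relation $\mathfrak{g}^\ast\mathfrak{g}=1$. This is precisely where finite-dimensionality must be used, and I would flag it explicitly, since in a general and possibly \emph{degenerated} Clifford algebra the presence of zero divisors might otherwise cast doubt on left/right inverse coincidence. Everything else is the one-line expansion above, so I expect the proof to be short; if desired one could add a closing remark that, when $tr(\mathfrak{g})$ happens to be scalar, the identity is the familiar minimal-polynomial relation satisfied by unit (dual) quaternions.
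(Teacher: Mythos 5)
Your proof is correct and follows essentially the same route as the paper: the paper's entire proof is the one-line expansion $tr(\mathfrak{g})\mathfrak{g}-N(\mathfrak{g})=(\mathfrak{g}+\mathfrak{g}^\ast)\mathfrak{g}-\mathfrak{g}\mathfrak{g}^\ast=\mathfrak{g}^2+\mathfrak{g}^\ast\mathfrak{g}-\mathfrak{g}\mathfrak{g}^\ast$, closed by the assertion that $\mathfrak{g}\mathfrak{g}^\ast=\mathfrak{g}^\ast\mathfrak{g}=1$ for every Spin group element. The only difference is that you actually derive the left-inverse relation $\mathfrak{g}^\ast\mathfrak{g}=1$ from the defining condition $N(\mathfrak{g})=\mathfrak{g}\mathfrak{g}^\ast=1$ via finite-dimensionality of the algebra --- a step the paper states without justification --- and that argument is sound and appropriately flagged, since the definition of $\mbox{Spin}_{(p,q,r)}$ in the paper imposes only the one-sided condition.
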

\begin{proof}
The proof is done by direct calculation.
\[\mathfrak{g}^2=tr(\mathfrak{g})\mathfrak{g}-N(\mathfrak{g})=\mathfrak{(g+g^\ast)g-gg^\ast=gg+g^\ast g-gg^\ast=gg}.\]
Here we have used, that $\mathfrak{gg^\ast=g^\ast g}=1$ for every Spin group element.
\end{proof}
\noindent
Now we ask for general conditions that have to be satisfied by kinematic elements. We aim at a projective variety that belongs to the Spin group. Thus, we take an arbitrary element $\mathfrak{g}\in\clifford^+_{(p,q,r)}$ and calculate the conditions from
\[\mathfrak{gg}^\ast-tr(\mathfrak{g})\mathfrak{g}-N(\mathfrak{g})=0.\]
This results in quadratic equations in several generators that all have to vanish. Now we use these equations and check the condition $\mathfrak{gg}^\ast\neq 0$ to get more conditions that will guarantee that the element is in the Spin group. To understand this method we present the $4$-dimensional Euclidean displacements, {\em i.e.}, $\mbox{Spin}_{(4,0,1)}$.
\begin{example}
A general element $\mathfrak{g}\in\clifford^+_{(4,0,1)}$ has the form
\begin{align*}
\mathfrak{g} &=a_0e_0+a_1e_{23}+a_2e_{13}+a_3e_{12}+a_4e_{15}+a_5e_{45}+a_6e_{25}+a_7e_{35}\\
&+c_1e_{14}+c_2e_{24}+c_3e_{24}+c_4e_{1235}+c_5e_{1345}+c_6e_{1245}+c_7e_{2345}+c_0e_{1234}.
\intertext{For kinematic elements we have}
0&=\mathfrak{gg}^\ast-tr(\mathfrak{g})-N(\mathfrak{g}).\\
\end{align*}
In expanded form this condition reads
\begin{align*}
0&=4(c_{6}a_{2}+c_{0}a_{4}-c_{4}c_{1}-c_{5}a_{3})e_{2345}-4(c_{5}c_{2}-c_{7}c_{1}-c_{6}c_{3}+c_{0}a_{5})e_{1235}\\
&+4(c_{0}a_{7}-c_{7}a_{2}+c_{5}a_{1}-c_{4}c_{3})e_{1245}-4(c_{0}a_{6}+c_{6}a_{1}-c_{4}c_{2}-c_{7}a_{3})e_{1345}.
\intertext{Now we use these four quadratic equations and formulate the Spin group condition $\mathfrak{gg}^\ast=1$,}
\mathfrak{gg}^\ast&=(a_{1}^2+c_{2}^2+c_{3}^2+c_{0}^2+c_{1}^2+a_{0}^2+a_{3}^2+a_{2}^2)e_0\\
&+2(c_{0}a_{0}-a_{1}c_{1}-c_{3}a_{3}+c_{2}a_{2})e_{1234}+2(c_{7}a_{0}-c_{3}a_{6}+a_{7}c_{2}-a_{5}a_{1})e_{2345}\\
&+2(c_{4}a_{0}-a_{7}a_{3}+a_{6}a_{2}-a_{1}a_{4})e_{1235}+2(c_{6}a_{0}-a_{5}a_{3}+a_{6}c_{1}-c_{2}a_{4})e_{1245}\\
&+2(a_{7}c_{1}-a_{5}a_{2}-c_{3}a_{4}+c_{5}a_{0})e_{1345}
\end{align*}
and get five more quadratic equations that have to be fulfilled. All in all we have the corresponding projective variety described by nine quadratic equations and one exceptional quadric with equation
\begin{align*}
N_1&:a_{0}^2+a_{1}^2+a_{2}^2+a_{3}^2+c_{0}^2+c_{1}^2+c_{2}^2+c_{3}^2=0,\\
R_1&:c_{6}a_{2}+c_{0}a_{4}-c_{4}c_{1}-c_{5}a_{3}=0,\\
R_2&:-c_{7}c_{1}-c_{6}c_{3}+c_{5}c_{2}+c_{0}a_{5}=0,\\
R_3&:-c_{7}a_{2}+c_{0}a_{7}+c_{5}a_{1}-c_{4}c_{3}=0,\\
R_4&:c_{0}a_{6}+c_{6}a_{1}-c_{4}c_{2}-c_{7}a_{3}=0,\\
R_5&:-a_{1}c_{1}-c_{3}a_{3}+c_{0}a_{0}+c_{2}a_{2}=0,\\
R_6&:-c_{3}a_{6}+a_{7}c_{2}+c_{7}a_{0}-a_{5}a_{1}=0,\\
R_7&:c_{4}a_{0}-a_{7}a_{3}+a_{6}a_{2}-a_{1}a_{4}=0,\\
R_8&:c_{6}a_{0}-a_{5}a_{3}+a_{6}c_{1}-c_{2}a_{4}=0,\\
R_9&:a_{7}c_{1}-a_{5}a_{2}-c_{3}a_{4}+c_{5}a_{0}=0.
\end{align*}
\end{example}
\noindent
This method results in an ideal that describes the same projective variety
\[\mathcal{V}:\bigcap\limits_{i=1}^9{R_i}\backslash N_1\subseteq\mathds{P}^{15}(\mathds{R}),\]
 as in the previous Section. Therefore, we give the $R_i$ as linear combinations of the $Q_i$
\begin{align*}
R_1&=\frac{1}{2}(Q_9-Q_5),&&R_2=\frac{1}{2}(Q_6-Q_2),&R_3=\frac{1}{2}(Q_3-Q_7),\\
R_4&=\frac{1}{2}(Q_8-Q_4),&&R_5=Q_1,&R_6=\frac{1}{2}(Q_9+Q_5),\\
R_7&=\frac{1}{2}(Q_6+Q_2),&&R_8=\frac{1}{2}(Q_3+Q_7),&R_9=\frac{1}{2}(Q_8+Q_4).\\
\end{align*}
The advantage of both presented methods is that we can calculate a Gr\"obner basis for the ideal and apply the theory of ideals to the constructed point models.

\section{Conclusion}
Old and well-known kinematic mappings were unified in one framework by the use of Clifford algebras. Collineations in any kinematic image and the belonging Cayley-Klein space can be derived from the homogeneous Clifford algebra model. We presented the Euclidean spaces of dimension $2$ and $3$ in detail as example. Furthermore, a general method to construct pseudo algebraic varieties in certain projective spaces was presented. These pseudo algebraic varieties are point models for the Spin group of a the homogeneous Clifford algebra model respectively the collineation group of a certain Cayley-Klein space. We performed the construction for the $4$-dimensional Euclidean. Moreover, the general construction enables the examination of new kinematic image spaces for possible Spin groups. Note that this construction can also be done for Pin groups or for any other subgroup of a Clifford group. Due to the fact that projective varieties correspond to ideals, methods of Gr\"obner basis calculus can now be applied to kinematic image spaces.

\section*{Acknowledgement}
This work was supported by the research project "Line Geometry for Lightweight Structures", funded by the DFG (German Research Foundation) as part of the SPP 1542.

\end{JGGarticle}
\end{document}